%-----------------------------------------------------------------------
% Beginning of article.tex
%-----------------------------------------------------------------------
%
% AMS-LaTeX 1.2 sample file for book proceedings, based on amsproc.cls.
%
% Replace amsproc by the documentclass for the target series, e.g. pspum-l.amsfonts,amscd,amssymb,
%
%\documentclass{amsproc}
\documentclass[reqno,12pt]{amsart}
\usepackage{epsfig,amscd,amssymb,amsmath,amsfonts}

\usepackage{amsmath}
\usepackage{graphicx}
\usepackage{lscape}
\usepackage{amsthm,color}
\usepackage{tikz}
\usepackage{multirow}
%\usetikzlibrary{graphdrawing.trees}
\usetikzlibrary{graphs}
\usetikzlibrary{graphs,quotes}
\usetikzlibrary{decorations.pathmorphing}

\tikzset{snake it/.style={decorate, decoration=snake}}
\tikzset{snake it/.style={decorate, decoration=snake}}

\usetikzlibrary{decorations.pathreplacing,decorations.markings,snakes}

\newtheorem{theorem}{Theorem}[section]

\theoremstyle{definition}
\newtheorem{definition}[theorem]{Definition}

\newtheorem{example}[theorem]{Example}

\theoremstyle{remark}
\newtheorem{remark}[theorem]{Remark}

\numberwithin{equation}{section}

%    Absolute value notation

%	 Shortcut for Exterior Notation
%	 	First entry for subscript
%		Second entry for number in "2(insert here) + 1"

\usepackage[margin=1.05in]{geometry}
\usepackage[colorlinks]{hyperref}
\usepackage{siunitx}

\setcounter{MaxMatrixCols}{21}

\makeatletter
\let\@wraptoccontribs\wraptoccontribs
\makeatother
\begin{document}
 \title{The $r$-equilibrium Problem}%\thanks{Appendix in collaboration with  }

%    Information for second author
\author{Mihai D. Staic}
%    Address of record for the research reported here
\address{Department of Mathematics and Statistics, Bowling Green State University, Bowling Green, OH 43403, United States. }
\address{Institute of Mathematics of the Romanian Academy, PO.BOX 1-764, RO-70700 Bu\-cha\-rest, Romania.}

%    Current address
%\curraddr{}
\email{mstaic@bgsu.edu}
\subjclass{Primary 15A15; Secondary 70G99}

%    Current address
 \begin{abstract}
In this paper we introduce the $r$-equilibrium problem and discuss connections to the map $det^{S^r}$. The case $r=2$ is an application of Newton's third law of motion, while $r=3$ deals with equilibrium of torque-like forces. 
 \end{abstract}

\keywords{determinant, equilibrium, mechanics}%partitions of hypergraphs, Betti numbers}

 \maketitle

\section{Introduction}

The notion of linear dependence and the determinant map play an important role virtually in all applications of linear algebra. One such example is finding equilibrium conditions in Newtonian mechanics. More precisely, suppose that we have one particle in the two dimensional space $\mathbb{R}^2$, and there are $3$ external forces $F_1$, $F_2$, and $F_3$ that act on that particle. %(which for convenience we assume it is located at the origin of the system). 
Then it is always possible to find $\lambda_1$, $\lambda_2$, $\lambda_3$ not all zero such that 
$$\lambda_1F_1+\lambda_2F_2+\lambda_3F_3=0,$$
in other words, the forces  can be rescaled to equilibrium. 

However, if we have only two external forces $F_1$ and $F_2$ that act on the particle, then we can rescale them to equilibrium if and only if  the determinant of the corresponding $2\times 2$ matrix is zero. For example, if 
$F_1=\begin{pmatrix}
1\\
0
\end{pmatrix}$ and 
$F_2=\begin{pmatrix}
0\\
1
\end{pmatrix}$ then we cannot find $\lambda_1$ and $\lambda_2$ (not both equal to zero) such that $\lambda_1F_1+\lambda_2F_2=0$.

Obviously, the remark above is a particular case of the  well know linear algebra fact that any collection of $d+1$ vectors in a $d$-dimensional vector space $V_d$ is linearly dependent. Moreover, a collection of $d$ vectors in $V_d$ will be linearly dependent if and only if the determinant of the corresponding matrix is zero. In this paper the above setting will be called the $1$-equilibrium problem.

The maps $det^{S^r}$ were introduced in  \cite{ls2,dets2}, and they are a byproduct of the exterior algebra-like construction $\Lambda_{V_d}^{S^r}$ from \cite{sta2,S3}.  When $r=1$ one can identify $\Lambda_{V_d}^{S^1}$ and $det^{S^1}$ with the exterior algebra $\Lambda_{V_d}$ and the determinant map $det$ respectively. %It is well known  that the component of degree $d$ of the exterior algebra has dimension $dim_k(\Lambda_{V_d}[d])=1$. 
It was conjectured in \cite{sta2,S3} that $dim_k(\Lambda_{V_d}^{S^r}[rd])=1$. %In the case $r=2$ the conjecture if known to be true for $d=2$ (\cite{sta2}), $d=3$ (\cite{edge}) and $d=4$ (\cite{fls}). Moreover, it is known that for very $d$ we have  $dim_k(\Lambda_{V_d}^{S^2}[2d])\geq 1$ (\cite{dets2}), and $dim_k(\Lambda_{V_d}^{S^3}[3d])\geq 1$ (\cite{ls2}). 
This conjecture is equivalent with the existence and uniqueness of a nontrivial linear map $det^{S^r}:V_d^{\otimes\binom{rd}{r}}\to k,$ with a certain universality property that will be recalled in Section \ref{section1}. The existence of such a map $det^{S^r}$ is known \cite{ls2,dets2}, although for $r\geq 4$ it's not known yet if that map is nontrivial for every $d$. The uniqueness is an open question in all but a few cases. It was shown in \cite{ls2,dets2} that the map $det^{S^r}$ can be used to detect if a $d$-partition of the complete $r$-uniform hypergraph $K_{rd}^r$ has zero Betti numbers or not. Also, one can show that the maps $det^{S^r}$ are invariant under the natural action of the special linear group $SL_d(k)$.

In this paper we introduce the $r$-equilibrium problem and discuss its connection to the map $det^{S^r}$. To make the presentation self contained, in Section \ref{section1} we recall the definition and some properties of the map $det^{S^r}$. 

In Section \ref{section2} we discuss how Newton's third law of motion leads to the formulation of the $2$-equilibrium problem, and its relation to the map $det^{S^2}$. More precisely, we show that if in a $d$-dimensional space $V_d$ we have a system consisting of $2d$ particles that interact through forces $F_{i,j}$ which satisfy the third law of motion, then the system can be rescaled to equilibrium if and only if  $det^{S^2}(\otimes_{1\leq i<j\leq 2d}(F_{i,j}))=0$. Moreover, any system with a least $2d+1$ particle that interact through forces $F_{i,j}$ (which satisfy the third law of motion) can be rescaled non-trivially  to equilibrium.  

In Section \ref{section3} we consider a natural generalization with regard to torque-like forces and show how that relates to the map $det^{S^3}$. In order to set-up the problem we postulate a rule similar to the third law of motion and  investigate its consequences.  We present an example based on the cross product in $\mathbb{R}^3$, and discuss some related problems. Finally, in Section \ref{sectionr} we formulate the general $r$-equilibrium problem and briefly discuss its connection to the map $det^{S^r}$.

One should notice that most of the mathematics from this paper is not new, and it can be found in \cite{ls2,dets2}. What is new is the interpretation of maps $det^{S^r}$ in the context of $r$-equilibrium problems. It is worth noticing that while the map $det^{S^2}$ was introduced from purely algebraic and combinatorial  considerations, it has an explicit physics interpretation based on Newton's third law of motion. It is therefore somehow surprising that such a map was not studied before. It would be interesting to investigate if the other $r$-equilibrium problems discussed in this paper have corresponding real world physics laws.

%%%%%%%%%%%%%%%%%
%%%%%%%%%%%%%%%%%

\section{Preliminaries}
\label{section1}

In this paper $\mathbb{R}$ is the field of real numbers, $k$ is a field, $\otimes=\otimes_k$, $V_d$ is a vector space of dimension $d$, and $\{e_1,e_2,\dots, e_d\}$ is a basis for $V_d$. To make the presentation self contained, we recall some results about the maps $det^{S^r}$. 

It is well known that the determinant is the unique (up to a constant) nontrivial linear map $$det:V_d^{\otimes d}\to k,$$ with the property that $det(\otimes_{1\leq i\leq d}(v_{i}))=0$ if there exist $1\leq x< y\leq d$ such that $v_x=v_y$. 

%\subsection{$det^{S^2}$}
The map $det^{S^2}$ was introduced and studied in  \cite{edge,sta2,dets2}. It is a nontrivial linear map $$det^{S^2}:V_d^{\otimes\binom{2d}{2}}\to k,$$ with the property that $det^{S^2}(\otimes_{1\leq i<j\leq 2d}(v_{i,j}))=0$ if there exist $1\leq x<y<z\leq 2d$ such that $v_{x,y}=v_{x,z}=v_{y,z}$.  

Since we will use it in this paper, we recall from \cite{dets2} the construction of $det^{S^2}$. Let $v_{i,j}\in V_d$ for $1\leq i<j\leq 2d$. For each $1\leq m\leq 2d$ define a vector equation $\mathcal{E}_m$ given by
\begin{equation}\label{dets2Rel}
    \sum_{s=1}^{m-1}(-1)^{s-1}\lambda_{s,m}v_{s,m}+\sum_{t=m+1}^{2d}(-1)^t\lambda_{m,t}v_{m,t}=0.
\end{equation}
One can show that the equations $\mathcal{E}_s$ for $1\leq s\leq 2d$ are not independent, but they satisfy the relation
\begin{equation}\label{dets2Eq}
    \sum_{s=1}^{2d}(-1)^s\mathcal{E}_s=0.
\end{equation}
In particular this means that the equation $\mathcal{E}_{2d}$ is a consequence of the first $2d-1$ equations $\mathcal{E}_l$ where $1\leq l\leq 2d-1$. And so, when we study the system consisting of equations  $\mathcal{E}_{l}$ for $1\leq l\leq 2d$, we can ignore the equation $\mathcal{E}_{2d}$. 

Notice that this new system  has $2d-1$ vector equations (which correspond to $d(2d-1)$ linear equations), while the number of variables is $ \displaystyle{\binom{2d}{2}=d(2d-1)}$. In particular, its corresponding matrix is a square matrix. The map $det^{S^2}$ is defined as the determinant of the matrix associated to the system consisting of the vector equations $\mathcal{E}_{l}$ for $1\leq l\leq 2d-1$.   

It was shown in \cite{dets2} that $det^{S^2}(\otimes_{1\leq i<j\leq 2d}(v_{i,j}))=0$ if there is $1\leq x<y<z\leq 2d$ such that $v_{x,y}=v_{x,z}=v_{y,z}$. Moreover, for each $d\geq 1$ there exists $E_d^{(2)}\in V_d^{\otimes\binom{2d}{2}}$ such that $det^{S^2}(E_d^{(2)})\neq 0$, and so the map is nontrivial. It is known that a map with the above properties is unique up to a constant for $d=2$ (\cite{sta2}), $d=3$  (\cite{edge}), and $d=4$  (\cite{fls}). For $d\geq 5$ the uniqueness of the map $det^{S^2}$ is still an open question. 
%Finally, it was shown in \cite{dets2} that the map $det^{S^2}$ can be used to detect if a $d$-partition of the complete graph $K_{2d}$ is cycle free or not. 
Some geometrical properties of the map $det^{S^2}$ were studied in \cite{sv}.

%\subsection{$det^{S^3}$}
The map $det^{S^3}$ was introduced in  \cite{ls2,S3}. It is a nontrivial linear map $$det^{S^3}:V_d^{\otimes\binom{3d}{3}}\to k,$$ with  the property that $det^{S^3}(\otimes_{1\leq i<j<k\leq 3d}(v_{i,j,k}))=0$ if there exists $1\leq x<y<z<t\leq 3d$ such that $v_{x,y,z}=v_{x,y,t}=v_{x,z,t}=v_{y,z,t}.$ It was shown in \cite{S3} that for $d=2$ a map with the above property is unique up to a constant. For $d\geq 3$ the uniqueness of the map $det^{S^3}$ is still an open question.  

To define the map $det^{S^3}$ we consider vector equations $\mathcal{E}_{m,n}$  for all $1\leq m<n\leq 3d$ defined by 
\begin{equation}\label{MainEq3}
 \sum_{r=1}^{m-1}(-1)^{r-1}\lambda_{r,m,n}v_{r,m,n}+\sum_{s=m+1}^{n-1}(-1)^s\lambda_{m,s,n}v_{m,s,n}+\sum_{t=n+1}^{3d}(-1)^{t+1}\lambda_{m,n,t}v_{m,n,t}=0.
\end{equation}
One can show that for each $1\leq n\leq 3d$  we have a dependence relation $\mathcal{R}_n$ given by
 \begin{equation}\label{relations}
     \mathcal{R}_n: \quad\quad\quad \sum_{m=1}^{n-1}(-1)^m\mathcal{E}_{m,n}+\sum_{p=n+1}^{3d}(-1)^{p+1}\mathcal{E}_{n,p}=0.
 \end{equation}
In particular, it follows that when studying the system determined by the vector equations $\mathcal{E}_{m,n}$ for $1\leq m<n\leq 3d$ we can ignore the equations $\mathcal{E}_{m,3d}$ for all $1\leq m\leq 3d-1$. 

Notice that since $\displaystyle{\binom{3d}{3}=d\binom{3d-1}{2}}$, the number of variables $\lambda_{i,j,k}$ is equal to the number of linear equations, and so the matrix associated to the system consisting of the vector equations $\mathcal{E}_{m,n}$ for $1\leq m<n\leq 3d-1$ is a square matrix. We denote its determinant by $det^{S^3}(\otimes_{1\leq i<j<k\leq 3d}(v_{i,j,k}))$. 

One can check that	 $det^{S^3}(\otimes_{1\leq i<j<k\leq 3d}(v_{i,j,k}))=0$ if there exists $1\leq x<y<z<t\leq 3d$ such that $v_{x,y,z}=v_{x,y,t}=v_{x,z,t}=v_{y,z,t}$. Moreover, it was shown in \cite{ls2} that for every $d\geq 1$ there exists $E_d^{(3)}\in V_d^{\otimes\binom{3d}{3}}$ such that $det^{S^3}(E_d^{(3)})\neq 0$, and so the map in nontrivial. Finally, it was shown in \cite{ls2} that the map $det^{S^3}$ can be used to detect if a $d$-partition of the complete $3$-uniform hypergraph $K_{3d}^3$ has zero Betti numbers or not.

%\subsection{$det^{S^r}$}
The map $det^{S^r}$ was introduced in  \cite{ls2,S3}. It is a linear map $$det^{S^r}:V_d^{\otimes\binom{rd}{r}}\to k,$$ with  the property that $det^{S^r}(\otimes_{1\leq i_1<i_2<\ldots<i_r\leq rd}(v_{i_1,\ldots,i_r}))=0$ if there exist $1\leq x_1<x_2<\ldots<x_{r+1}\leq rd$ such that
 $$v_{x_1,x_2,\ldots,x_r}=v_{x_1,\ldots,x_{r-1},x_{r+1}}=v_{x_1,\ldots,x_{r-2},x_r,x_{r+1}}=\ldots=v_{x_1,x_3,\ldots,x_r,x_{r+1}}=v_{x_2,x_3,\ldots,x_{r+1}}.$$
If $r\geq 4$ it is not known if the map $det^{S^r}$ is nontrivial or not (some particular cases were discussed in the appendix of \cite{ls2}). It was conjectured in \cite{sta2,S3} that a nontrivial map with the above property exists and it is unique up to a constant. The map $det^{S^r}$ can be used to detect if a $d$-partition of the complete $r$-uniform hypergraph $K_{rd}^r$ has zero Betti numbers or not. For more details, including the construction of  $det^{S^r}$, we refer to \cite{ls2}.

%%%%%%%%%%%%%%%%%
%%%%%%%%%%%%%%%%%

\section{The $2$-equilibrium Problem}

\label{section2} 
In this section we introduce the $2$-equilibrium problem and discuss its relation to the the map $det^{S^2}$. 
This consideration is based on Newton's third law of motion: {\it the forces that two particles exert on each other are equal and opposite}. 

As a warm-up, we consider the dimension two case. Take four particles in a two dimensional space that interact through forces $F_{i,j}\in \mathbb{R}^2$ for $1\leq i,j\leq 4$ (with the convention  that $F_{i,j}$ acts on the particle $i$, and $F_{j,i}$ acts on the particle $j$). 
By Newton's third law of motion we  must have $F_{i,j}=-F_{j,i}$ (in particular $F_{i,i}=0$). 
We would like to see under what conditions we can rescale the forces $F_{i,j}$ such that the system is in equilibrium. More precisely, we want to find scalars $\lambda_{i,j}=\lambda_{j,i}\in \mathbb{R}$ for all $1\leq i<j\leq 4$, not all zero that are a solution to the system
\[\begin{cases}
\lambda_{1,2}F_{1,2}+\lambda_{1,3}F_{1,3}+\lambda_{1,4}F_{1,4}=0\\
\lambda_{2,1}F_{2,1}+\lambda_{2,3}F_{2,3}+\lambda_{2,4}F_{2,4}=0\\
\lambda_{3,2}F_{3,1}+\lambda_{3,2}F_{3,2}+\lambda_{3,4}F_{3,4}=0\\
\lambda_{4,1}F_{4,1}+\lambda_{4,2}F_{4,2}+\lambda_{4,3}F_{4,3}=0.\\
\end{cases}\]
Notice that the first equation is the sum of all (rescaled) forces that act on the first particle, etc. So, the above system ensures that after rescaling, the total force which acts on the $i$-th particle is zero for all $1\leq i\leq 4$. 

Next, if  we denote $v_{i,j}=(-1)^{i+j-1}F_{i,j}$ for all $1\leq i<j\leq 4$, and we use the fact that $\lambda_{i,j}=\lambda_{j,i}$ and $F_{i,j}=-F_{j,i}$ to get
\[\begin{cases}
\lambda_{1,2}v_{1,2}-\lambda_{1,3}v_{1,3}+\lambda_{1,4}v_{1,4}=0\\
-\lambda_{1,2}v_{1,2}+\lambda_{2,3}v_{2,3}-\lambda_{2,4}v_{2,4}=0\\
\lambda_{1,3}v_{1,3}-\lambda_{2,3}v_{2,3}+\lambda_{3,4}v_{3,4}=0\\
-\lambda_{1,4}v_{1,4}+\lambda_{2,4}v_{2,4}-\lambda_{3,4}v_{3,4}=0.\\
\end{cases}\]
This new system is equivalent to the one determined by Equations (\ref{dets2Rel}) when $d=2$. Since the map $det^{S^2}$ is linear (and so the sign in front of $F_{i,j}$ does not mater),  we conclude that the above equilibrium problem has a nontrivial solution if and only if $det^{S^2}(\otimes_{1\leq i<j\leq 4}(v_{i,j}))=0$, or equivalently  $det^{S^2}(\otimes_{1\leq i<j\leq 4}(F_{i,j}))=0$. 

Next, consider five particles in a two dimensional space. The equilibrium problem will correspond to a system that has $5$ vector equations (or equivalently $10$ linear equations), and $10$ variables $\lambda_{i,j}$ for $1\leq i<j\leq 5$. More precisely we have 
\[\begin{cases}
\lambda_{1,2}F_{1,2}+\lambda_{1,3}F_{1,3}+\lambda_{1,4}F_{1,4}+\lambda_{1,5}F_{1,5}=0\\
\lambda_{2,1}F_{2,1}+\lambda_{2,3}F_{2,3}+\lambda_{2,4}F_{2,4}+\lambda_{2,5}F_{2,5}=0\\
\lambda_{3,1}F_{3,1}+\lambda_{3,2}F_{3,2}+\lambda_{3,4}F_{3,4}+\lambda_{3,5}F_{3,5}=0\\
\lambda_{4,1}F_{4,1}+\lambda_{4,2}F_{4,2}+\lambda_{4,3}F_{4,3}+\lambda_{4,5}F_{4,5}=0\\
\lambda_{5,1}F_{5,1}+\lambda_{5,2}F_{5,2}+\lambda_{5,3}F_{5,3}+\lambda_{5,4}F_{5,4}=0.
\end{cases}\]
However, since $F_{i,j}=-F_{j,i}$ and $\lambda_{i,j}=\lambda_{j,i}$, the $5$ vector equations are linearly dependent (their sum is zero), and so the system has a nontrivial solution.

More generally, we have the following setting.

\begin{definition}  A $q$-particle $2$-equilibrium system is a collection of vectors $F_{i,j}\in V_d$ for each $1\leq i,j\leq q$ (i.e. the force $F_{i,j}$ acts on the particle $i$), that satisfy the third law of motion, that is    
$$F_{i,j}=-F_{j,i},$$ 
for all $1\leq i,j\leq q$. In particular we have that $F_{i,i}=0$ for all $1\leq i\leq q$. 

We say that the $q$-particle $2$-equilibrium problem associated to the above system  has a nontrivial solution if there exist $\lambda_{i,j}=\lambda_{j,i}\in k$ for every $1\leq i<j\leq q$,  not all zero, that satisfy the vector equations ${\mathcal{F}}_m$ given by
\begin{eqnarray}
  \sum_{\substack{i=1\\ i\neq m}}^{q}\lambda_{m,i}F_{m,i}=0,\label{System2d}
\end{eqnarray}
for all $1\leq m\leq q$ (i.e. the sum of all rescaled forces $\lambda_{m,i}F_{m,i}$ that act on the $m$-th particle is zero). 
\end{definition}

We have the following result. 

\begin{theorem} Consider a $q$-particle $2$-equilibrium system determined by $F_{i,j}\in V_d$ for all $1\leq i,j\leq q$. 
\begin{enumerate}
\item If $q>2d$ then the $q$-particle $2$-equilibrium problem  has a nontrivial solution. 
\item If $q=2d$ then the $2d$-particle $2$-equilibrium problem has a nontrivial solution if and only if $det^{S^2}(\otimes_{1\leq i<j\leq 2d}(F_{i,j}))=0$. 
\end{enumerate}
\end{theorem}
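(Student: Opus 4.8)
The plan is to treat the two parts separately, but both rest on one algebraic consequence of Newton's third law that I would record first. For \emph{every} choice of scalars $\lambda_{i,j}=\lambda_{j,i}$ the left-hand sides of the equations $\mathcal{F}_m$ of (\ref{System2d}) add up to zero:
\[
\sum_{m=1}^{q}\ \sum_{\substack{i=1\\ i\neq m}}^{q}\lambda_{m,i}F_{m,i}
=\sum_{1\leq i<j\leq q}\bigl(\lambda_{i,j}F_{i,j}+\lambda_{j,i}F_{j,i}\bigr)=0,
\]
since $\lambda_{i,j}F_{i,j}+\lambda_{j,i}F_{j,i}=\lambda_{i,j}\bigl(F_{i,j}-F_{i,j}\bigr)=0$. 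This says that among the coefficient rows of the full system the $d$ rows coming from $\mathcal{F}_q$ are the negative of the sum of the other $q-1$ blocks, so $\mathcal{F}_q$ may be discarded without changing the solution set and the $qd\times\binom{q}{2}$ coefficient matrix has rank at most $(q-1)d$.

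For part (1) I would simply count dimensions. When $q>2d$ the system has $\binom{q}{2}$ unknowns but rank at most $(q-1)d$, so the solution space has dimension at least
\[
\binom{q}{2}-(q-1)d=(q-1)\cdot\frac{q-2d}{2}>0,
\]
which produces a nontrivial equilibrium.

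For part (2) I would take $q=2d$, discard $\mathcal{F}_{2d}$ as above, and observe that the remaining $2d-1$ vector equations amount to $(2d-1)d$ scalar equations in $\binom{2d}{2}=(2d-1)d$ unknowns — a square homogeneous system, hence solvable nontrivially exactly when its coefficient determinant is zero. The remaining work is to identify that determinant with $det^{S^2}(\otimes_{1\leq i<j\leq 2d}(F_{i,j}))$ up to a nonzero scalar. Setting $v_{i,j}=(-1)^{i+j-1}F_{i,j}$ and using $\lambda_{i,j}=\lambda_{j,i}$ and $F_{j,i}=-F_{i,j}$, I would rewrite $\mathcal{F}_m$ exactly as in the four-particle warm-up and check that it equals $(-1)^{m+1}$ times the equation $\mathcal{E}_m$ of (\ref{dets2Rel}); then the coefficient matrix of $\{\mathcal{F}_m\}_{1\leq m\leq 2d-1}$ is obtained from the matrix defining $det^{S^2}$ by multiplying whole blocks of rows by $\pm1$, so the two determinants differ only by sign and vanish together. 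Finally linearity of $det^{S^2}$ in each tensor argument gives $det^{S^2}(\otimes_{i<j}(v_{i,j}))=\bigl(\prod_{i<j}(-1)^{i+j-1}\bigr)\,det^{S^2}(\otimes_{i<j}(F_{i,j}))$, with the scalar factor equal to $\pm1$, so the $2d$-particle problem has a nontrivial solution if and only if $det^{S^2}(\otimes_{i<j}(F_{i,j}))=0$.

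I expect the only real friction to be the sign bookkeeping in this last step: matching $\mathcal{F}_m$ with the defining relation $\mathcal{E}_m$ of $det^{S^2}$ and then tracking the $\pm1$ factors through a block determinant and through multilinearity. Everything else is routine — part (1) is a rank count, and part (2) reduces the equilibrium condition to the singularity of a square matrix which, up to these harmless signs, is the very matrix whose determinant defines $det^{S^2}$, whose nontriviality and linearity were recalled in Section \ref{section1}.
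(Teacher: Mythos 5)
Your proposal is correct and follows essentially the same route as the paper: drop the last vector equation using the dependence $\sum_m\mathcal{F}_m=0$, count variables against the remaining $(q-1)d$ scalar equations for part (1), and for part (2) substitute $v_{i,j}=(-1)^{i+j-1}F_{i,j}$ to identify the square system with the one defining $det^{S^2}$, with multilinearity absorbing the signs. The sign bookkeeping you flag works out exactly as you anticipate ($\mathcal{F}_m=(-1)^{m-1}\mathcal{E}_m$ after the substitution), so there is no gap.
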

\begin{proof} For the first statement, notice that the number of variables is  $\displaystyle{{q \choose 2}=\frac{q(q-1)}{2}}$, while the number of vector equations is $q$ (i.e. $qd$ linear equations). Moreover, just like in the warm-up example,  the vector equations are dependent (their sum  is zero), and so it is enough to consider only the first $q-1$ vector equations. 
Since $q> 2d$  we have that  $$\frac{q(q-1)}{2}> (q-1)d,$$ which means that we have more variables then linear equations,
and so the corresponding system has nontrivial solutions. 

Next assume that $q=2d$. Again, the $2d$ vector equations are dependent (their sum is zero), so our system is equivalent with the system consisting of only the first $2d-1$ equations. For $1\leq i<j\leq 2d$ we denote $$v_{i,j}=(-1)^{i+j-1}F_{i,j}.$$ This gives $F_{i,j}=(-1)^{i+j-1}v_{i,j}$, and $F_{j,i}=(-1)^{i+j}v_{i,j}$ for all $1\leq i<j\leq 2d$. With this notation Equation (\ref{System2d}) becomes 
\begin{eqnarray}
(-1)^{m-1}\left(\sum_{s=1}^{m-1}(-1)^{s-1}\lambda_{s,m}v_{s,m}+\sum_{t=m+1}^{2d}(-1)^{t}\lambda_{m,t}v_{m,t}\right)=0,\label{System2dV}
\end{eqnarray}
for all $1\leq m\leq 2d-1$. 

Finally, notice that this equation is equivalent to Equation (\ref{dets2Rel}), and so the system given by the Equations (\ref{System2d}) has a nontrivial solution if and only if $det^{S^2}(\otimes_{1\leq i<j\leq 2d}(v_{i,j}))=0$, or equivalently $det^{S^2}(\otimes_{1\leq i<j\leq 2d}(F_{i,j}))=0$.% which proves the second statement. 
\end{proof}

\begin{remark} It was shown in \cite{dets2} that for every $d\geq 1$ there exists an element $E_d\in V_d^{\otimes {2d \choose 2}}$ such that $det^{S^2}(E_d)\neq 0$. In particular this means that there are examples of  $2d$-particle $2$-equilibrium problems that have only the trivial solution. 
\end{remark}
\begin{remark} As mentioned above, the equilibrium problem studied in this section is based on Newton's third law of motion,  {\it the forces two particle exert on each other are equal and opposite}. According to \cite{gps} this statement is sometimes called  the weak law of action and reaction. Notice that, it does not assume that the forces act in the direction of line joining the two particle, which in  \cite{gps} is called the strong law of action and reaction. 

It was shown in \cite{dets2} that if there exist $p_1,\dots,p_{2d}\in V_d$ such that $v_{i,j}=p_j-p_i$ then $det^{S^2}(\otimes_{1\leq i<j\leq 2d}(v_{i,j}))=0$.  This means that if a $2d$-particles $2$-equilibrium system obeys the strong law of action and reaction then the corresponding $2d$-particle $2$-equilibrium problem has a nontrivial solution. 
\end{remark}

%%%%%%%%%%%%%%%%%
%%%%%%%%%%%%%%%%%

\section{The $3$-equilibrium Problem}
\label{section3}

In this section we introduce the $3$-equilibrium problem and discuss  how it relates to the map $det^{S^3}$.

To the best of our knowledge, in the context of torque forces there is no equivalent statement for the law of action and reaction. To set up the problem, we are going to assume that when three particles interact they behave according to a law similar to the third law of motion. More precisely we  postulate the following rule. 

Consider three particles $p_1$, $p_2$, $p_3$, such that the particle $p_3$ exerts a force $F_{1,2,3}$ on the the oriented axis determined by $p_1$ and $p_2$. We postulate that in such a situation for each  $1\leq i,j,k\leq 3$ the particle $p_k$ exerts a force $F_{i,j,k}$  on the oriented axis determined by $p_i$ and $p_j$ such  that 
$$F_{i,j,k}=F_{j,k,i}=F_{k,i,j}=-F_{j,i,k}=-F_{i,k,j}=-F_{k,j,i}.$$
In particular we have that $F_{i,i,i}=0$, and $F_{i,i,j}=F_{i,j,i}=F_{j,i,i}=0$ for all $1\leq i,j\leq 3$. 
%\end{remark}

\begin{remark} \label{remarktorque2}
We are not arguing that such an assumption is reasonable from a physical point of view, we only postulate this rule and investigate its consequences.   

To have some intuition about this setting, one can think that $F_{i,j,k}$ acts as a torque-like force around the oriented axis determined by the particles $i$ and $j$. Notice that $F_{i,j,k}$ and $F_{j,i,k}$ act on the same axis but in opposite directions, which inevitably creates a tension in that axis. 

The $3$-equilibrium problem will investigate under what circumstances the forces $F_{i,j,k}$ can be rescaled such that the sum of the (torque-like) forces on each oriented axis is zero  (i.e. there is no tension in any of the axis). 
\end{remark}

\begin{definition}   A $q$-particle $3$-equilibrium system is a collection of vectors $F_{i,j,k}\in V_d$ for each $1\leq i,j,k\leq q$ (i.e. the force exerted by the $k$-th particle  on the oriented axis $(i,j)$), such  that     
$$F_{i,j,k}=F_{j,k,i}=F_{k,i,j}=-F_{j,i,k}=-F_{i,k,j}=-F_{k,j,i}.$$  In particular we have $F_{i,i,i}=0$, and $F_{i,i,j}=F_{i,j,i}=F_{j,i,i}=0$ for all $1\leq i,j\leq q$. 

We say that the $q$-particles $3$-equilibrium problem associated to the above system  has a nontrivial solution if for every $1\leq i<j<k\leq q$ there exist $\lambda_{i,j,k}=\lambda_{j,k,i}=\lambda_{k,i,j}=\lambda_{j,i,k}=\lambda_{k,j,i}=\lambda_{i,k,j}\in k$,  not all zero, that satisfy the vector equations $\mathcal{F}_{m,n}$ given by
\begin{eqnarray}
\sum_{\substack{i=1\\ i\notin \{m,n\}}}^{q}\lambda_{m,n,i}F_{m,n,i}=0,\label{System3d}
\end{eqnarray}
for all $1\leq m<n\leq q$.  
\end{definition}
Notice that Equation (\ref{System3d}) ensures that the sum of all (rescaled) forces $\lambda_{m,n,i}F_{m,n,i}$ that act on the oriented axis $(m,n)$ is zero. 

\begin{theorem} Consider a $q$-particle $3$-equilibrium system determined by $F_{i,j,k}\in V_d$ for each $1\leq i,j,k\leq q$. 
\begin{enumerate}
\item If $q>3d$ then the $q$-particle $3$-equilibrium problem  has a nontrivial solution. 
\item If $q=3d$ then the $q$-particle $3$-equilibrium problem has a nontrivial solution if and only if $det^{S^3}(\otimes_{1\leq i<j<k\leq 3d}F_{i,j,k})=0$. 
\end{enumerate}
\end{theorem}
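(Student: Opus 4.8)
\emph{Sketch of proof.} The argument runs parallel to that of the $2$-equilibrium theorem. Two things need to be established: a family of linear dependence relations among the vector equations $\mathcal{F}_{m,n}$ that lets us discard all equations involving the particle $q$, and — for part (2) — a sign substitution turning the reduced system into the system \eqref{MainEq3} that defines $det^{S^3}$.

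For $1\le a\neq b\le q$ put $\mathcal{G}_{a,b}=\sum_{i\notin\{a,b\}}\lambda_{a,b,i}F_{a,b,i}$, so that $\mathcal{G}_{a,b}=\mathcal{F}_{a,b}$ when $a<b$; since $\lambda_{b,a,i}=\lambda_{a,b,i}$ and $F_{b,a,i}=-F_{a,b,i}$, we get $\mathcal{G}_{b,a}=-\mathcal{G}_{a,b}$. First I would prove that for each particle $1\le c\le q$,
\[
\mathcal{R}_c:\qquad \sum_{c<p\le q}\mathcal{F}_{c,p}\;-\;\sum_{1\le m<c}\mathcal{F}_{m,c}\;=\;0 .
\]
This follows by expanding $\sum_{a\neq c}\mathcal{G}_{c,a}$ and collecting, for each unordered pair $\{a,i\}$ disjoint from $c$, the two terms coming from $(a,i)$ and $(i,a)$: they are $\lambda_{c,a,i}F_{c,a,i}$ and $\lambda_{c,i,a}F_{c,i,a}=\lambda_{c,a,i}(-F_{c,a,i})$, which cancel. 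Reading $\mathcal{R}_c$ for $1\le c\le q-1$, the term $p=q$ lets us express $\mathcal{F}_{c,q}$ as a linear combination of the equations $\mathcal{F}_{m,n}$ with $n\le q-1$. Hence the $q-1$ equations $\mathcal{F}_{1,q},\dots,\mathcal{F}_{q-1,q}$ are redundant, and the system \eqref{System3d} is equivalent to the subsystem of the $\binom{q-1}{2}$ vector equations $\mathcal{F}_{m,n}$ with $1\le m<n\le q-1$, i.e. to $d\binom{q-1}{2}$ scalar linear equations in the $\binom{q}{3}$ unknowns $\lambda_{i,j,k}$.

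Part (1) is then a dimension count: $\binom{q}{3}=\tfrac{q}{3}\binom{q-1}{2}$, so $q>3d$ forces $\binom{q}{3}>d\binom{q-1}{2}$, there are strictly more unknowns than equations, and a nontrivial solution exists. For part (2), when $q=3d$ one has $\binom{3d}{3}=d\binom{3d-1}{2}$, so the subsystem is square and has a nontrivial solution if and only if the determinant of its coefficient matrix is $0$. I would finish by setting $v_{i,j,k}=(-1)^{i+j+k-1}F_{i,j,k}$ for $1\le i<j<k\le 3d$ and checking, using the hypergraph symmetries of the $F_{i,j,k}$ exactly as in the $r=2$ computation, that multiplying $\mathcal{F}_{m,n}$ by $(-1)^{m+n}$ turns it into the vector equation \eqref{MainEq3}. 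Thus the coefficient matrix of the subsystem is, up to row rescalings by $\pm1$, the matrix whose determinant is $det^{S^3}(\otimes_{1\le i<j<k\le 3d}v_{i,j,k})$; its vanishing is equivalent to $det^{S^3}(\otimes v_{i,j,k})=0$, which by multilinearity of $det^{S^3}$ is in turn equivalent to $det^{S^3}(\otimes_{1\le i<j<k\le 3d}F_{i,j,k})=0$.

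The only place demanding real care is the dependence relation $\mathcal{R}_c$ together with the sign check in part (2): one must track the three patterns $(-1)^{r-1},(-1)^{s},(-1)^{t+1}$ in \eqref{MainEq3}, the symmetry signs of the $F_{i,j,k}$, and the conversion sign $(-1)^{i+j+k-1}$, and verify that every residual sign is either a harmless $\pm1$ row scaling of the coefficient matrix or a harmless multilinear rescaling of an argument of $det^{S^3}$. Structurally this is the verbatim $r=3$ analogue of the bookkeeping already carried out for $r=2$ in the warm-up and the proof of the previous theorem, so no new idea is required — only patience with the indices.
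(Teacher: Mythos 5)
Your proposal is correct and follows essentially the same route as the paper: discard the redundant equations involving the last particle via the dependence relations $\mathcal{R}_c$, count dimensions for part (1), and for part (2) rescale $F_{i,j,k}$ by a sign $(-1)^{i+j+k}$ (your $(-1)^{i+j+k-1}$ differs only by a harmless global sign) to identify the square system with the one defining $det^{S^3}$. Your derivation of $\mathcal{R}_c$ via the antisymmetric $\mathcal{G}_{a,b}$ is actually spelled out more carefully than in the paper, which merely asserts the identity (and with your signs, which are the correct ones).
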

\begin{proof} For the first statement notice that the number of variables is  $\displaystyle{{q \choose 3}=\frac{q(q-1)(q-2)}{6}}$, while the number of vector equations is $\displaystyle{\frac{q(q-1)}{2}}$ (i.e. $\displaystyle{d\frac{q(q-1)}{2}}$ scalar equations). However, not all of the equations are linearly independent. More precisely we have the identity 
\begin{equation} \sum_{t=1}^{m-1}\mathcal{F}_{s,m}+\sum_{t=m+1}^{q}\mathcal{F}_{m,t}=0,\label{relmr3}
\end{equation}
for all $1\leq m\leq q-1$. This means that it is enough to consider only those equations $\mathcal{F}_{m,n}$ with $1\leq m< n\leq q-1$. 
And so, we have only $\displaystyle{\frac{(q-1)(q-2)}{2}}$ vector equations (corresponding to $\displaystyle{d\frac{(q-1)(q-2)}{2}}$ linear equations). Since $q>3d$ then  $$\frac{q(q-1)(q-2)}{6}> d\frac{(q-1)(q-2)}{2},$$
and so we have more variables than linear equations. This implies that the corresponding system has nontrivial solutions. 

Next assume that $q=3d$. Just like above the $\displaystyle{\frac{3d(3d-1)}{2}}$ vector equations are dependent (see Equation (\ref{relmr3})), and our system is equivalent with a system consisting of $\displaystyle{\frac{(3d-1)(3d-2)}{2}}$ vector equations $\mathcal{F}_{m,n}$ where $1\leq m<n\leq 3d-1$. 

For $1\leq i<j<k\leq 3d$ we denote $$v_{i,j,k}=(-1)^{i+j+k}F_{i,j,k}.$$ This gives that  $F_{i,j,k}=(-1)^{i+j+k}v_{i,j,k}$, $F_{i,k,j}=(-1)^{i+j+k-1}v_{i,j,k}$ and $F_{j,k,i}=(-1)^{i+j+k}v_{i,j,k}$ for all $1\leq i<j<k\leq 3d$. With these notation Equation (\ref{System3d}) becomes 
\begin{eqnarray*}
(-1)^{m+n-1}\left(\sum_{s=1}^{m-1}(-1)^{s-1}\lambda_{s,m,n}v_{s,m,n}+\sum_{t=m+1}^{n-1}(-1)^{t}\lambda_{m,t,n}v_{m,t,n}+\sum_{u=n+1}^{3d}(-1)^{u+1}\lambda_{m,n,u}v_{m,n,u}\right)=0,\label{System3dV}
\end{eqnarray*}
for all $1\leq m<n\leq 3d-1$. This is equivalent to Equation (\ref{MainEq3}), and so the system given by the Equations (\ref{System3d}) has a nontrivial solution if and only if $det^{S^3}(\otimes_{1\leq i<j<k\leq 3d}(v_{i,j,k}))=0$, or equivalently $det^{S^3}(\otimes_{1\leq i<j<k\leq 3d}(F_{i,j,k}))=0$.
\end{proof}

\begin{example}  \label{exampleCP} Let $p_i\in \mathbb{R}^3$ for all $1\leq i\leq 9$. For all $1\leq i,j,k\leq 9$ we define 
\begin{equation}
F_{i,j,k}=(p_j-p_i)\times (p_k-p_i)=p_i\times p_j+p_j\times p_k+p_k\times p_i, 
\end{equation}
where $\times$ is the cross product in $\mathbb{R}^3$. One can check that $F_{i,j,k}$ determines a $9$-particle $3$-equilibrium system in $\mathbb{R}^3$. We will show that $det^{S^3}(\otimes_{1\leq i<j<k\leq 9}(F_{i,j,k}))=0$. 

Indeed, it is a simple linear algebra exercise to see that there exist $\lambda_i\in \mathbb{R}$ for all $1\leq i\leq 9$ such that 
\begin{eqnarray*}
\sum_{i=1}^9\lambda_ip_i&=&0\\
\sum_{i=1}^9\lambda_i&=&0,
\end{eqnarray*} and at least three of the $\lambda_i$ are not zero. For $1\leq i< j<k\leq 9$ take  $$\lambda_{i,j,k}=\lambda_i\lambda_j\lambda_k,$$ 
and notice that not all $\lambda_{i,j,k}$ are zero. 

For $1\leq m<n\leq 3d$ we have 
\begin{eqnarray*}
&&\sum_{\substack{i=1\\ i\notin \{m,n\}}}^{9}\lambda_{m,n,i}F_{m,n,i}=\sum_{\substack{i=1\\ i\notin \{m,n\}}}^{9}\lambda_{m}\lambda_n\lambda_i(p_m\times p_n+p_n\times p_i+p_i\times p_m)\\
&=&\left(\sum_{i=1}^{9}\lambda_{m}\lambda_n\lambda_i(p_m\times p_n)\right)-\lambda_m\lambda_n\lambda_m(p_m\times p_n)-\lambda_m\lambda_n\lambda_n(p_m\times p_n)+\\
&&\left(\sum_{i=1}^{9}\lambda_{m}\lambda_n\lambda_i(p_n\times p_i)\right)-\lambda_m\lambda_n\lambda_m(p_n\times p_m)-\lambda_m\lambda_n\lambda_n(p_n\times p_n)+\\
&&\left(\sum_{i=1}^{9}\lambda_{m}\lambda_n\lambda_i(p_i\times p_m)\right)-\lambda_m\lambda_n\lambda_m(p_m\times p_m)-\lambda_m\lambda_n\lambda_n(p_n\times p_m)\\
&=&\left(\sum_{i=1}^{9}\lambda_i\right)\lambda_{m}\lambda_n(p_m\times p_n)+(\lambda_{m}\lambda_np_n)\times \left(\sum_{i=1}^{9}\lambda_ip_i\right)+
\left(\sum_{i=1}^{9}\lambda_ip_i\right)\times (\lambda_{m}\lambda_np_m)\\
&=&0
\end{eqnarray*}
Which shows that the $9$-particle $3$-equilibrium problem associated to $F_{i,j,k}$ has a nontrivial solution, and so $det^{S^3}(\otimes_{1\leq i<j<k\leq 9}(F_{i,j,k}))=0$. 
\end{example}

\begin{example} \label{exampleEP} More generally, let $V_s$ be a vector space such that $dim(V_s)=s\geq 3$, and consider vectors $v_t\in V_s$ for all $1\leq t\leq 3d=3\displaystyle{{s \choose 2}}$. Take the vector space  $V_s\wedge V_s$  which we identify to $V_d$, where $d=\displaystyle{{s \choose 2}}$  (a basis in $V_d$ is given  by $\{e_i\wedge e_j\;|\; 1\leq i<j\leq s\}$). 
 For each $1\leq i,j,k\leq 3d=3\displaystyle{{s \choose 2}}$ we define: 
$$F_{i,j,k}=v_i\wedge v_j+v_j\wedge v_k+v_k\wedge v_i.$$
It is easy to check that $F_{i,j,k}$ determines a $3d$-particle $3$-equilibrium system in $V_d=V_{{s \choose 2}}$. 

Just like in Example \ref{exampleCP}  we can find $\lambda_i\in k$ for all $1\leq i\leq 3d$ such that 
\begin{eqnarray*}
\sum_{i=1}^{3d}\lambda_iv_i&=&0\\
\sum_{i=1}^{3d}\lambda_i&=&0,
\end{eqnarray*} and at least three of the $\lambda_i$ are not zero. For $1\leq i< j<k\leq 3d$ take  $$\lambda_{i,j,k}=\lambda_i\lambda_j\lambda_k,$$ 
and notice that not all $\lambda_{i,j,k}$ are zero. One can check that 
\begin{eqnarray*}
\sum_{\substack{i=1\\ i\notin \{m,n\}}}^{3d}\lambda_{m,n,i}F_{m,n,i}=0,
\end{eqnarray*}
and so $det^{S^3}(\otimes_{1\leq i<j<k\leq 3d}(F_{i,j,k}))=0$. For $s=3$ we have $d=3$ and using the usual identification of the wedge product and cross product we recover Example \ref{exampleCP}. 
\end{example} 

\begin{remark} If the postulate made at the beginning of this section would hold in actual life, then it is likely that the magnitude of the force $F_{i,j,k}$ from Example \ref{exampleCP} would be  inverse proportional to the area of the region determined by the three particles $p_i, p_j, p_k\in \mathbb{R}^3$. So, a more reasonable expression for the force $F_{i,j,k}$ would be 
\begin{equation}
F_{i,j,k}=\frac{C}{\lVert p_i\times p_j+p_j\times p_k+p_k\times p_i\rVert^2}(p_i\times p_j+p_j\times p_k+p_k\times p_i), 
\end{equation}
where $C$ is a constant that depends on the properties of the particles (i.e. mass, charge, etc), and the type of interaction among the three particles. For the purpose of Example \ref{exampleCP} this change does not make any difference. 
\end{remark}
\begin{remark} It was shown in \cite{ls2} that for every $d\geq 1$ there exist an element $E_d^{(3)}\in V_d^{\otimes {3d \choose 3}}$ such that $det^{S^3}(E_d^{(3)})\neq 0$. In particular this means that there are examples of $3d$-particles $3$-equilibrium problems that have only the trivial solution. 
\end{remark}
\begin{remark} Notice that in this section we consider interactions among triples of particles, but ignore interactions between pairs of particles.  From a physics point of view, this is probably not a good assumption. One might hope to be able to deal at the same time with both types of interactions, between pairs of particles (as in Section \ref{section2}), and among triples of particles (as in this section). At this point it is not clear to us what the correct setting for such a problem is, or how to find the corresponding determinant-like map. 
\end{remark}

%%%%%%%%%%%%%%%%%
%%%%%%%%%%%%%%%%%

\section{The $r$-equilibrium Problem}
\label{sectionr}

In this section we introduce the $q$-particle $r$-equilibrium problem and show how it relates to the map $det^{S^r}$.  We present only the main ideas and leave all of the details to the reader.

\begin{definition}   
A $q$-particle $r$-equilibrium system is a collection of vectors $F_{i_1,i_2,\dots,i_r}\in V_d$ for each  $1\leq i_1,i_2,\dots,i_r\leq q$ (i.e. the force exerted by the $i_r$-th particle on the oriented $(r-2)$-space determined by the particles $i_1,\dots, i_{r-1}$), such that 
$$F_{i_1,\dots,i_r}=sign(\sigma)F_{\sigma(i_1),\dots,\sigma(i_r)}$$ for all $\sigma$ permutations of the set $\{i_1,i_2,\dots,i_r\}$, where $sign(\sigma)$ is the signature of the permutation $\sigma$. In particular we have $F_{i_1,\dots,j,\dots,j,\dots i_r}=0$. 

We say that the $q$-particles $r$-equilibrium problem associated to the above system  has a nontrivial equilibrium solution if there exist $\lambda_{i_1,\dots,i_r}=\lambda_{\sigma(i_1),\dots,\sigma(i_r)}$ for every $1\leq i_1<\dots<i_r\leq q$ and all $\sigma$ permutations of the set $\{i_1,i_2,\dots,i_r\}$,  not all zero, that satisfy the vector equation $\mathcal{F}_{m_1,\dots, m_{r-1}}$ given by
\begin{eqnarray}
\sum_{\substack{i=1\\ i\notin \{m_1,\dots,m_{r-1}\}}}^{q}\lambda_{m_1,\dots,m_{r-1},i}F_{m_1,\dots,m_{r-1},i}=0,\label{SystemRd}
\end{eqnarray}
for all $1\leq m_1<\dots<m_{r-1}\leq q$. 
\end{definition}

\begin{theorem} Consider a $q$-particle $r$-equilibrium system determined by $F_{i_1,\dots,i_r}\in V_d$ for $1\leq i_1,\dots,i_r\leq q$.  
\begin{enumerate}
\item If $q>rd$ then the $q$-particles $r$-equilibrium problem  has a nontrivial solution. 
\item If $q=rd$ then the $q$-particles $r$-equilibrium problem has a nontrivial solution if and only if $det^{S^r}(\otimes_{1\leq i_1<i_2<\dots<i_r\leq rd}F_{i_1,i_2,\dots,i_r})=0$. 
\end{enumerate}
\end{theorem}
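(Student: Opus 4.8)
The plan is to run the same two-step argument used above for $r=2$ and $r=3$, now in full generality: for (1) one compares the number of unknowns $\lambda_{i_1,\dots,i_r}$ with the number of \emph{independent} scalar equations coming from \eqref{SystemRd}, and for (2) one exhibits a sign-normalized change of variables turning \eqref{SystemRd} into exactly the linear system whose matrix defines $det^{S^r}$. Throughout I would take for granted the general-$r$ construction of $det^{S^r}$ from \cite{ls2}: after discarding a suitable dependent block, $det^{S^r}(\otimes v_{i_1,\dots,i_r})$ is by definition the determinant of the square coefficient matrix of the vector equations $\mathcal{E}_{m_1,\dots,m_{r-1}}$ (the $r$-fold analogue of \eqref{dets2Rel} and \eqref{MainEq3}), together with the $r$-fold analogue of the dependence relations \eqref{dets2Eq}, \eqref{relations}, \eqref{relmr3}.

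First I would record those dependence relations. Exactly as for $r=2,3$, for every $(r-2)$-subset $\{n_1,\dots,n_{r-2}\}\subseteq\{1,\dots,q\}$ one obtains, by summing the symmetry relations $F_{i_1,\dots,i_r}=\mathrm{sign}(\sigma)F_{\sigma(i_1),\dots,\sigma(i_r)}$ with alternating signs, an identity $\sum_{i}\pm\,\mathcal{F}_{n_1,\dots,n_{r-2},i}=0$ involving precisely those vector equations whose index set contains $\{n_1,\dots,n_{r-2}\}$. Applying the relation attached to an $(r-2)$-subset that avoids $q$ lets one solve for the unique equation in it whose index set contains $q$; doing this for all $\binom{q-1}{r-2}$ such subsets shows that \eqref{SystemRd} is equivalent to the subsystem of the $\mathcal{F}_{m_1,\dots,m_{r-1}}$ with $1\le m_1<\dots<m_{r-1}\le q-1$. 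That is $\binom{q-1}{r-1}$ vector equations, i.e. $d\binom{q-1}{r-1}$ scalar equations, against $\binom{q}{r}$ unknowns. Since $\binom{q}{r}=\tfrac{q}{r}\binom{q-1}{r-1}$, one has $\binom{q}{r}>d\binom{q-1}{r-1}$ exactly when $q>rd$, so in that case the homogeneous system has a nonzero solution, proving (1).

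For (2), when $q=rd$ the same identity reads $\binom{rd}{r}=\tfrac{rd}{r}\binom{rd-1}{r-1}=d\binom{rd-1}{r-1}$, so the reduced system is square. I would then set $v_{i_1,\dots,i_r}=(-1)^{i_1+\dots+i_r}F_{i_1,\dots,i_r}$ for $1\le i_1<\dots<i_r\le rd$ (any global sign works; this generalizes the substitutions used for $r=2,3$) and rewrite $\mathcal{F}_{m_1,\dots,m_{r-1}}$ in the $v$'s. If the index $i$ is inserted into slot $\ell$ of the increasing tuple $(m_1,\dots,m_{r-1})$, producing the sorted tuple $(n_1,\dots,n_r)$, then $F_{m_1,\dots,m_{r-1},i}=(-1)^{r-\ell}F_{n_1,\dots,n_r}$, while $\lambda_{m_1,\dots,m_{r-1},i}=\lambda_{n_1,\dots,n_r}$ and $(-1)^{n_1+\dots+n_r}=(-1)^{m_1+\dots+m_{r-1}}(-1)^{i}$; collecting signs, the coefficient of $\lambda_{n_1,\dots,n_r}v_{n_1,\dots,n_r}$ in $\mathcal{F}_{m_1,\dots,m_{r-1}}$ equals $(-1)^{m_1+\dots+m_{r-1}+r}$ times $(-1)^{i+\ell}$, and $(-1)^{i+\ell}$ is exactly the coefficient appearing in the defining equation $\mathcal{E}_{m_1,\dots,m_{r-1}}$ of $det^{S^r}$ (the pattern already visible in \eqref{dets2Rel} and \eqref{MainEq3}). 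Hence each $\mathcal{F}_{m_1,\dots,m_{r-1}}$ is a nonzero scalar multiple of $\mathcal{E}_{m_1,\dots,m_{r-1}}$, so \eqref{SystemRd} has a nontrivial solution iff the square matrix of the $\mathcal{E}$-system is singular, i.e. iff $det^{S^r}(\otimes_{1\le i_1<\dots<i_r\le rd}(v_{i_1,\dots,i_r}))=0$; by multilinearity this differs from $det^{S^r}(\otimes_{1\le i_1<\dots<i_r\le rd}(F_{i_1,\dots,i_r}))$ only by the nonzero factor $\prod(-1)^{i_1+\dots+i_r}$, which gives (2).

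The step that needs the most care is the reduction in (2) — not the arithmetic but the combinatorics: verifying that the general-$r$ analogues of \eqref{dets2Eq}/\eqref{relations} really exist with the alternating-sign shape claimed, that they cut \eqref{SystemRd} down to \emph{exactly} the ``index set $\subseteq\{1,\dots,q-1\}$'' block rather than just some block of the right cardinality, and that the coefficient bookkeeping $(-1)^{r-\ell}(-1)^{i}=(-1)^{r}\cdot(-1)^{i+\ell}$ holds for every insertion slot $\ell$ and matches the normalization used for $\mathcal{E}_{m_1,\dots,m_{r-1}}$ in the construction of $det^{S^r}$ in \cite{ls2}. These are precisely the routine verifications the statement invites the reader to supply, and none introduces a new idea beyond the $r=2,3$ cases, so I do not expect a genuine obstruction.
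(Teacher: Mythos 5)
Your proposal is correct and follows exactly the template the paper uses for the $r=2$ and $r=3$ cases (the paper itself states this theorem with the proof explicitly left to the reader): discard the dependent block of equations whose index set contains $q$ via the $(r-2)$-subset relations, count $\binom{q}{r}$ unknowns against $d\binom{q-1}{r-1}$ scalar equations for part (1), and for part (2) rescale by signs so that each $\mathcal{F}_{m_1,\dots,m_{r-1}}$ becomes a nonzero multiple of the defining equation $\mathcal{E}_{m_1,\dots,m_{r-1}}$ of $det^{S^r}$. Your sign bookkeeping $(-1)^{r-\ell}(-1)^{n_1+\cdots+n_r}=(-1)^{m_1+\cdots+m_{r-1}+r}(-1)^{i+\ell}$ checks out against the coefficient patterns in \eqref{dets2Rel} and \eqref{MainEq3}, so the details you flag as routine are indeed the ones the paper intends the reader to supply.
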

\begin{remark} It is not difficult to  generalize Example \ref{exampleEP} to any $r$. However, for  $r\geq 4$ it is not know yet if the map $det^{S^r}$ is nontrivial for all $d$. Moreover, when $r\geq 4$ we do not have any intuitive example of an  $r$-equilibrium system. Because of this we will not elaborate any further for the case $r\geq 4$. 
\end{remark}

\section*{Declarations of interest} 
%None. 
There is no conflict of interest.

%\section*{Declaration of Generative AI and AI-assisted technologies in the writing process} 
%None. 
%There is no conflict of interest.

%\section*{Data availability}

%Data sharing is not applicable to this article as no datasets were generated or analyzed during the current study.

\section*{Acknowledgment}
We thank  Steve Lippold and Alin Stancu for comments on an earlier version of this paper. 
%We thank ... %XYZ for comments on an earlier version of this paper. 

\bibliographystyle{amsalpha}

 \end{document}